\documentclass[11pt]{amsart}

\usepackage{amsmath,amssymb,amsthm,mathtools}
\usepackage{enumitem}
\usepackage{aliascnt}
\usepackage[colorlinks=true,linkcolor=blue,citecolor=blue,urlcolor=blue]{hyperref}
\usepackage[nameinlink,noabbrev]{cleveref}

\newtheorem{theorem}{Theorem}[section]
\newtheorem{conjecture}{Conjecture}[section]

\newaliascnt{lemma}{theorem}
\newtheorem{lemma}[lemma]{Lemma}
\aliascntresetthe{lemma}

\newaliascnt{corollary}{theorem}

\aliascntresetthe{corollary}

\newaliascnt{observation}{theorem}
\newtheorem{observation}[observation]{Observation}
\aliascntresetthe{observation}

\newaliascnt{proposition}{theorem}
\newtheorem{proposition}[proposition]{Proposition}
\aliascntresetthe{proposition}

\theoremstyle{plain}
\newaliascnt{definition}{theorem}

\aliascntresetthe{definition}

\theoremstyle{remark}
\newaliascnt{remark}{theorem}
\newtheorem{remark}[remark]{Remark}
\aliascntresetthe{remark}

\DeclareMathOperator{\Cay}{Cay}
\DeclareMathOperator{\ex}{ex}
\newcommand{\Z}{\mathbb Z}
\newcommand{\og}{\operatorname{og}}
\newcommand{\floor}[1]{\left\lfloor #1\right\rfloor}

\title[Odd cycles in prime cyclic Cayley graphs]{Odd cycles in symmetric Cayley graphs on prime cyclic groups}

\author[Wei Li]{Wei Li}
\thanks{Corresponding author: Wei Li.}
\address{Fujian Agriculture and Forestry University, Fuzhou, Fujian, China}
\email{liwei@fafu.edu.cn}

\author[Kai Yang]{Kai Yang}
\address{Fuzhou University, Fuzhou, Fujian, China}
\email{443926471@qq.com}

\date{}

\subjclass[2020]{05C25, 05C35, 11B13, 11P70}
\keywords{Cayley graph, circulant graph, odd cycle, Tur\'{a}n problem, Cauchy--Davenport theorem, additive combinatorics, pancyclicity}

\begin{document}

\begin{abstract}
Let $p$ be an odd prime and let $S\subseteq \Z_p$ be symmetric with $0\notin S$.  Let $\Cay(\Z_p,S)$ be the undirected Cayley graph on $\Z_p$ in which $x$ and $y$ are adjacent if and only if $x-y\in S$.  For $1\le \ell\le (p-1)/2$, define
\[
\ex_{\Cay}(C_{2\ell+1},\Z_p)=\max\{|S|: S=-S,\ 0\notin S,\ \Cay(\Z_p,S)\text{ contains no }C_{2\ell+1}\}.
\]
Confirming a conjecture of  Cashman and Kelley, we prove that if $p=2\ell+1$, then $\ex_{\Cay}(C_{2\ell+1},\Z_p)=0$, while if $p>2\ell+1$, then
\[
\ex_{\Cay}(C_{2\ell+1},\Z_p)=2\floor{\frac{p+2\ell+1}{2(2\ell+1)}}.
\]
The proof combines a sharp additive zero-sum odd-girth argument with weak odd pancyclicity to transfer the result from odd-girth exclusion to fixed odd-cycle exclusion.  We also give a canonical extremal family, an exact extremality criterion in terms of odd zero-sum avoidance, and an example showing that extremizers need not be dilates of the canonical construction.
\end{abstract}

\maketitle

\section{Introduction}
Given a graph $F$, a host graph $G$ is \emph{$F$-free} if it does not contain $F$ as a subgraph.
The \emph{Tur\'{a}n number} $\mathrm{ex}(n,F)$ of $F$ is the maximum number of edges in an $F$-free graph on $n$ vertices. The study of $\mathrm{ex}(n,F)$ is a central topic in extremal graph theory~\cite{Bollobas1998,Simonovits1968}.  One of the classical results in the area is Tur\'{a}n's theorem~\cite{Mantel1907,Turan1941},
which states that for every integer $\ell \ge 2$ the Tur\'{a}n number $\mathrm{ex}(n,K_{\ell+1})$
is uniquely attained by the complete balanced $\ell$-partite graph on $n$ vertices,
called the Tur\'{a}n graph.  The Erd\H{o}s--Stone theorem~\cite{ErdosStone1946} gives the asymptotic value of $\mathrm{ex}(n,F)$ in terms of the chromatic number of $F$.

Classical Tur\'{a}n-type problems maximize the number of edges subject to forbidden-subgraph constraints.
From an algebraic point of view, however, such problems are generally insensitive to the symmetries that define group-derived graph classes: the parts of a Tur\'{a}n graph are chosen as a set partition of the vertex set, and complete adjacency between two parts need not encode a group law or a translation-invariant difference rule.  Cayley graphs retain this algebraic structure~\cite{Biggs1993,GodsilRoyle2001}.  Once the vertex set is identified with a group, all edges are generated uniformly by a fixed connection set.  In this paper we study a Cayley--Tur\'{a}n problem in which the admissible host graphs are constrained by this translation-invariant symmetry.

Let $p$ be an odd prime, let $\Z_p$ denote the residue classes modulo $p$, and let $S\subseteq \Z_p$ be symmetric, that is, $S=-S$, with $0\notin S$.  We write $\Cay(\Z_p,S)$ for the \emph{undirected Cayley graph} with vertex set $\Z_p$, in which two vertices $x,y\in\Z_p$ are adjacent if and only if $x-y\in S$. The set $S$ is the \emph{connection set}.  For a graph $F$, define the \emph{Cayley--Tur\'{a}n number} over $\Z_p$ by
\[
\ex_{\Cay}(F,\Z_p)=
\max\{|S|:S=-S,\ 0\notin S,\ \Cay(\Z_p,S)\text{ contains no copy of }F\}.
\]
Determining the Cayley--Tur\'{a}n number of a graph is not a direct translation of the ordinary Tur\'{a}n problem: a forbidden subgraph becomes a system of additive equations together with non-degeneracy conditions in $S$~\cite{Bajnok2017,TaoVu2006}.

Cashman and Kelley \cite{CashmanKelley2025} initiated this line of study.  They studied Cayley--Tur\'{a}n numbers of cycles and proved that $\ex_{\Cay}(C_{2\ell},\Z_p)=2$ for $2\le \ell<p/2$. For odd cycles, they conjectured the following formula.
\begin{conjecture}\label{conjecture-odd-cycle}
For $p>2\ell+1$, we have 
\[
\ex_{\Cay}(C_{2\ell+1},\Z_p)
=2\floor{\frac{p+2\ell+1}{2(2\ell+1)}}.
\]    
\end{conjecture}
They confirmed the case $\ell=1$. In this paper, we prove \Cref{conjecture-odd-cycle}.

\begin{theorem}\label{thm:main}
Let $p$ be an odd prime and let $\ell\ge 1$ be an integer with $p\ge 2\ell+1$.
If $p>2\ell+1$, then 
\[
\ex_{\Cay}(C_{2\ell+1},\Z_p)=2\floor{\frac{p+2\ell+1}{2(2\ell+1)}}.
\]
Otherwise, $\ex_{\Cay}(C_{2\ell+1},\Z_{2\ell+1})=0$. 
\end{theorem}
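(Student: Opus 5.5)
We split the argument into a lower-bound construction, an upper bound via odd girth, and a transfer from ``no short odd closed walk'' to ``no $C_{2\ell+1}$''. Throughout, write $k=2\ell+1$ and $m=\floor{\frac{p+k}{2k}}$.

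The degenerate case $p=k$ is immediate. If $S\neq\emptyset$ and $s\in S$, then $0,s,2s,\dots,(p-1)s$ is a Hamiltonian cycle of length $p=2\ell+1$ in $\Cay(\Z_p,S)$. Hence $\ex_{\Cay}(C_{2\ell+1},\Z_p)=0$.

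For the lower bound when $p>k$, we use the canonical interval set $S_0=\pm\{a,a+1,\dots,a+m-1\}$ with $a$ chosen near $p/(2k)$ in the arc picture. The idea is that $\Cay(\Z_p,S_0)$ has no closed walk of odd length at most $k$, i.e.\ odd girth $>k$. Concretely, we view $\Z_p$ as the circle $\mathbb R/p\mathbb Z$ and take $S_0$ inside the arc of points $x$ with $\|x/p-1/2\|<1/(2k)$. A sum of $j$ elements ($j$ odd, $j\le k$) then lies within distance $j\cdot(\text{half-width})$ of $j/2 \bmod 1$, which is $1/2$. We check that this arc contains exactly $2m$ nonzero residues and that the integrality bookkeeping prevents any such sum from being $0 \bmod p$. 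Since a $C_{j}$ with $j$ odd is a zero sum $s_1+\dots+s_j=0$ with $s_i\in S$, this shows there is no $C_k$.

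The upper bound has two parts.

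\textbf{(a) Additive odd-girth bound.} If $|S|\ge 2m+2$, then some odd $j\le k$ admits $s_1+\dots+s_j=0$. Equivalently, $0\in jS$, the $j$-fold sumset. By Cauchy--Davenport, $|kS|\ge\min(p,k|S|-k+1)$. Iterating $T_i=S+T_{i-2}$ on odd levels, we want $0\in S+S+\cdots$ at some odd level $\le k$. The cleaner route is as follows. Consider $A=S\cup\{0\}$, so $|A|=|S|+1$ and $\ell A+S$ contains all odd-length sums of length $\le k$ (using $0$ as padding, which corresponds to backtracking). By Cauchy--Davenport, $|\ell A+S|\ge\min(p,\ell|S|+|S|)$. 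Since $\ell A$ is symmetric, $0\in\ell A+S$ iff $\ell A\cap(-S)=\ell A\cap S\ne\emptyset$. I would instead argue that $\ell A$ and $\ell A+S$ must intersect when $|\ell A|+|\ell A+S|>p$, because both are symmetric about 0 in a suitable sense. Making this count sharp, so that it yields exactly the threshold $|S|\ge 2m+2$ with $2m+2>(p+k)/k$, is the delicate point. I expect the parity bookkeeping here to work out to $(k)(|S|)\ge p+k$ or so.

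\textbf{(b) Transfer.} A zero sum of odd length $j\le k$ gives a closed odd walk, hence an odd cycle of length $\le k$ in $G=\Cay(\Z_p,S)$. We then need a $C_k$ exactly. I would prove a weak odd pancyclicity lemma: in a vertex-transitive circulant of prime order with odd girth $g\le k<p$ and $|S|\ge 4$, all odd lengths between $g$ and $p$ occur. The approach is to take a shortest odd cycle $x_0,\dots,x_{g-1}$ using generators $s_1,\dots,s_g$. We then lengthen it by $2$ repeatedly, replacing a step $s_i$ by $s_i+t-t'$ patterns or inserting a pair $t,-t$ along a distinct detour, using $p$ prime to keep vertices distinct via translating segments. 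Alternatively, we find a cycle of length $k$ directly by combining the zero-sum with Cauchy--Davenport again: we want $s_1+\dots+s_k=0$ with partial sums distinct. This requires choosing the sequence so that no proper consecutive partial sum vanishes, which can be arranged by reordering (a classical lemma on zero-sum sequences in $\Z_p$).

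I expect step (b), guaranteeing a genuine cycle of exactly length $k$ rather than a closed walk, to be the main obstacle. The small cases $|S|=2$, where $G$ is a single $p$-cycle, are consistent since $2\le 2m$.
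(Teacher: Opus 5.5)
Your overall structure matches the paper's: a central interval for the lower bound, Cauchy--Davenport for the odd-girth bound, and odd pancyclicity to get an exact $C_k$. The lower bound is fine. Your arc argument is a valid substitute for the paper's integer bookkeeping, and it gives the $\floor{(p+k)/(2k)}$ form directly, so you avoid the paper's floor identity. One slip: the left endpoint is $a=(p+1)/2-m$, because the interval is centred at $p/2$, not near $p/(2k)$.

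In (a), you wrote down the inequality that finishes the job and then abandoned it. If $|S|\ge 2m+2$, then $k|S|\ge k(2m+2)>p+k$, so $k|S|-k+1>p$. Cauchy--Davenport then gives $kS=\Z_p\ni 0$, an odd zero-sum of length exactly $k$. The detour through $A=S\cup\{0\}$ is broken in two ways. First, $\ell A+S$ only contains sums of at most $\ell+1<k$ elements of $S$. Second, padding by $0$ destroys parity: since $\ell A\supseteq S$, you always have $0\in S+S\subseteq \ell A+S$, so the intersection you want to force carries no information.

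The genuine gap is (b), which is exactly where the paper uses an external theorem. That theorem is the Alspach--Bendit--Maitland weak odd pancyclicity theorem: a connected non-bipartite Cayley graph of valency at least $3$ on an abelian group contains cycles of every odd length from its odd girth up to its order. Your proposed lemma is this theorem restricted to $\Z_p$. However, ``lengthen by $2$ via detours, using primality to keep vertices distinct'' is not a proof, and the result is genuinely nontrivial.

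Your alternative is to reorder a zero-sum witness so that its partial sums are distinct. This fails because no such reordering lemma holds. Take $S=\{\pm a,\pm 2a\}$ with $p\ge 7$, and the witness $a+a+a+(-a)+(-2a)=0$ of $0\in 5S$. In every cyclic arrangement, $-a$ has a neighbour equal to $a$, since at most one of its two neighbours is $-2a$. That gives a proper zero-sum block, hence a repeated partial sum, so no ordering works. Cauchy--Davenport by itself gives no control over which witness you obtain, so new input is needed here.

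To close the gap, cite Alspach--Bendit--Maitland as the paper does, keeping your separate treatment of $|S|=2$. Its hypotheses hold automatically:
\begin{itemize}
\item connectivity is automatic in prime order;
\item non-bipartiteness follows from the odd closed walk, or from $p$ being odd;
\item $|S|\ge 2m+2\ge 4$ gives valency at least $3$.
\end{itemize}
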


The proof combines two ingredients: an additive odd-girth extremal theorem based on the Cauchy--Davenport theorem \cite{Cauchy1813,Davenport1935}, and a weak odd pancyclicity theorem of Alspach, Bendit, and Maitland for abelian Cayley graphs \cite{AlspachBenditMaitland2013}.  These ingredients also yield sharp extremal examples and a concise criterion for extremality.

\subsection*{Organization}
\Cref{sec:prelim} records notation and the zero-sum correspondence.  \Cref{sec:girth} proves the additive odd-girth theorem.  \Cref{sec:fixed} proves the main theorem.  \Cref{sec:extremal} gives canonical extremal constructions and an exact extremality criterion.  \Cref{sec:remarks} discusses the scope of the method and possible extensions.

\section{Preliminaries}\label{sec:prelim}

This section fixes the additive notation used throughout the paper and records the standard tools needed later.  
Throughout, $p$ is an odd prime.
For $A_1,\ldots,A_k\subseteq\Z_p$, let 
\[
A_1+\cdots+A_k=\{a_1+\cdots+a_k:a_i\in A_i\}.
\]
For $A\subseteq\Z_p$, let 
\[
kA=\underbrace{A+\cdots+A}_{k\text{ times}},
\]
where repeated summands are allowed.  Thus $0\in kA$ means that there exist, not necessarily distinct, elements $a_1,\ldots,a_k\in A$ such that $a_1+\cdots+a_k=0$. We use the following standard form of the Cauchy--Davenport Theorem .

\begin{theorem}[Cauchy--Davenport Theorem]\label{thm:CD}\cite{Cauchy1813,Davenport1935}
If $A_1,\ldots,A_k$ are nonempty subsets of $\Z_p$, then
\[
|A_1+\cdots+A_k|\ge \min\{p, |A_1|+\cdots+|A_k|-k+1\}.
\]
In particular,
\[
|kA|\ge \min\{p,k|A|-(k-1)\}.
\]
\end{theorem}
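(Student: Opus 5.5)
The statement is the classical Cauchy--Davenport Theorem, so the plan is to reduce it to the two-set case and prove that case by the Dyson $e$-transform, inducting on the size of the second summand. First I reduce the $k$-fold inequality to the case $k=2$ by induction on $k$. Suppose $|A_1+\cdots+A_{k-1}|\ge \min\{p,\sum_{i<k}|A_i|-(k-2)\}$. Apply the two-set bound to $A_1+\cdots+A_{k-1}$ and $A_k$. This gives
\[
|A_1+\cdots+A_k|\ge \min\Bigl\{p,\ \min\bigl\{p,\textstyle\sum_{i<k}|A_i|-(k-2)\bigr\}+|A_k|-1\Bigr\},
\]
which is at least $\min\{p,\sum_i|A_i|-k+1\}$, since $|A_k|-1\ge 0$. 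The special case $|kA|\ge\min\{p,k|A|-(k-1)\}$ follows by taking all $A_i=A$.

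For two nonempty sets $A,B\subseteq\Z_p$ I prove $|A+B|\ge\min\{p,|A|+|B|-1\}$ by induction on $|B|$. The case $|B|=1$ is trivial, since $A+B$ is a translate of $A$. If $|A|+|B|>p$, then for every $x$ the sets $A$ and $x-B$ must intersect, so $A+B=\Z_p$. Hence I may assume $|A|+|B|\le p$, $|B|\ge 2$, and $|A|<p$.

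Here primality enters. Because $|A|<p$ and every nonzero element generates $\Z_p$, $A$ is not a union of cosets of any nontrivial subgroup. So for any two distinct $b,b'\in B$, the translate $A+(b-b')$ is not equal to $A$. Translating $B$ (which does not change $|A+B|$), I may assume $0\in B$ and $d:=b-0\in B$ with $A+d\ne A$. Hence there is $a\in A$ with $a-b\notin A$ for some $b\in B$. Translating $A$ by $-a$ and renaming, I get $0\in A$ and some $b\in B$ with $-b\notin A$, so $B\not\subseteq A-\ldots$; concretely I arrange that $A\cap B$-type containment fails in the form needed by the transform.

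Now apply the Dyson transform with respect to some $e\in A$ chosen so that $e+B\not\subseteq A$:
\[
A'=A\cup(e+B),\qquad B'=B\cap(A-e).
\]
The transform has three standard properties:
\begin{itemize}
\item $A'+B'\subseteq A+B$, checked directly;
\item $|A'|+|B'|=|A|+|B|$, by inclusion--exclusion, since $|e+B|=|B|$;
\item $B'$ is nonempty (it contains $0$, because $e\in A$ and $0\in B$), and $|B'|<|B|$ by the choice of $e$.
\end{itemize}
The induction hypothesis then gives
\[
|A+B|\ge|A'+B'|\ge\min\{p,|A'|+|B'|-1\}=\min\{p,|A|+|B|-1\}.
\]

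The main obstacle is guaranteeing an $e\in A$ with $e+B\not\subseteq A$. If no such $e$ existed, then $A+B\subseteq A$, so $A+B=A$ and $A$ would be stable under adding every element of $B-B$. Since $|B|\ge 2$, $B-B$ contains a nonzero element, which generates $\Z_p$. Then $A$ would be invariant under all of $\Z_p$, forcing $A=\Z_p$ and contradicting $|A|<p$. This is exactly where primality of $p$ is used. An alternative route I would keep in reserve is the Combinatorial Nullstellensatz: if $|A+B|\le |A|+|B|-2$, choose $C\supseteq A+B$ with $|C|=|A|+|B|-2$ and apply the Nullstellensatz to $\prod_{c\in C}(x+y-c)$. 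The coefficient of $x^{|A|-1}y^{|B|-1}$ is $\binom{|A|+|B|-2}{|A|-1}$, which is nonzero mod $p$ because $|A|+|B|-2<p$; this yields the contradiction.
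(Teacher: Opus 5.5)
Your proof is correct. The paper itself gives no proof of this statement; it quotes the Cauchy--Davenport theorem as a classical black box and only uses the special case $|mS|\ge\min\{p,m|S|-(m-1)\}$ in the upper bound of Theorem~\ref{thm:odd-girth}. So your argument is a self-contained replacement for a citation rather than an alternative to an argument in the paper.

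What your route buys:
\begin{itemize}
\item The induction on $k$ is needed. The partial sums $A_1+\cdots+A_{k-1}$ are no longer of the form $A$, so even the special case $|kA|$ requires the general $A_1,\ldots,A_k$ statement.
\item Dyson's $e$-transform with induction on $|B|$ is the standard elementary proof.
\item It isolates exactly where primality is used: a nonempty proper subset of $\Z_p$ cannot be invariant under a nonzero translation.
\end{itemize}

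Your final paragraph proves cleanly that some $e\in A$ satisfies $e+B\not\subseteq A$. The earlier paragraph beginning ``Here primality enters'' is redundant and should be deleted. It ends in a non-statement (``$A\cap B$-type containment fails in the form needed''), and its translation of $A$ is unnecessary. The only normalization you need is $0\in B$, which guarantees $0\in B'$ and hence $B'\neq\emptyset$.

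The Combinatorial Nullstellensatz alternative is also correct and shorter. Once you reduce to $|A|+|B|\le p$, the coefficient $\binom{|A|+|B|-2}{|A|-1}$ is a unit mod $p$. The cost is relying on an algebraic tool rather than a purely combinatorial exchange argument.
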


Recall that a \emph{closed walk of length $j$} is a vertex sequence $v_0v_1\cdots v_j$ in a graph satisfying $v_j=v_0$ and with every consecutive pair adjacent. Every closed walk can be decomposed into an edge-disjoint union of cycles. We shall use the following elementary observation.

\begin{observation}\label{lem:closed-walk-cycle}
If a simple graph contains a closed walk of odd length $j$, then it contains an odd cycle of length at most $j$.    
\end{observation}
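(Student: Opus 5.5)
We argue by strong induction on the odd length $j$, or equivalently by taking a minimal counterexample. Among all closed walks of odd length at most $j$ in the graph, choose one, $W=v_0v_1\cdots v_{j'}$ with $v_{j'}=v_0$, whose length $j'$ is minimal. We will show that $W$ is itself an odd cycle, which gives the claim since $j'\le j$.

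Since the graph is simple, there are no loops, so $j'\ge 3$; a closed walk of length $1$ would need a loop. Suppose some vertex repeats among $v_0,\ldots,v_{j'-1}$, say $v_a=v_b$ with $0\le a<b\le j'-1$. Then $W$ splits into two closed walks:
\[
v_a v_{a+1}\cdots v_b \quad\text{of length } b-a,
\]
\[
v_b v_{b+1}\cdots v_{j'}=v_0 v_1\cdots v_a \quad\text{of length } j'-(b-a).
\]
Both lengths are positive and strictly less than $j'$. Their sum $j'$ is odd, so exactly one of them is odd. That odd one is a closed walk of odd length less than $j'$, which contradicts the minimality of $j'$.

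Hence $v_0,\ldots,v_{j'-1}$ are pairwise distinct and $j'\ge 3$. Consecutive vertices are adjacent, and $v_{j'-1}$ is adjacent to $v_0$. So these vertices and edges form a cycle of odd length $j'\le j$.

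The only point needing care is the splitting step: both pieces must be genuine closed walks of strictly smaller positive length, and parity forces one of them to be odd. Simplicity is used only to exclude loops, that is, length-$1$ closed walks. Two traversals of the same edge give an even closed walk of length $2$, so they cause no trouble.
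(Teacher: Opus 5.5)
Your proof is correct and is essentially the paper's argument. The paper justifies the observation only by remarking that a closed walk decomposes into cycles, one of which must be odd by parity; your choice of a shortest odd closed walk, split at a repeated vertex, is a rigorous form of that decomposition. Your version is in fact more careful, since the paper's phrase ``edge-disjoint union of cycles'' is not literally true for walks that retrace an edge, whereas in your argument such back-and-forth pieces are even closed walks and never interfere.
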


The next lemma is the elementary bridge between additive relations and walks in Cayley graphs.  

\begin{lemma}\label{lem:walk-zero}
Let $S\subseteq\Z_p$ be symmetric with $0\notin S$, and let $j\ge1$.
\begin{enumerate}[label=(\roman*)]
\item If $s_1,\ldots,s_j\in S$ and $s_1+\cdots+s_j=0$, then $\Cay(\Z_p,S)$ contains a closed walk of length $j$.
\item If, in addition, the partial sums
\[
0,\quad s_1,\quad s_1+s_2,\quad\ldots,\quad s_1+\cdots+s_{j-1}
\]
are pairwise distinct, then $\Cay(\Z_p,S)$ contains a cycle of length $j$.
\item Conversely, every cycle of length $j$ in $\Cay(\Z_p,S)$ gives elements $s_1,\ldots,s_j\in S$ with $s_1+\cdots+s_j=0$ and pairwise distinct partial sums as above.
\end{enumerate}
\end{lemma}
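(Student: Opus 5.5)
The statement to prove is \Cref{lem:walk-zero}. I would argue directly from the definition of adjacency in $\Cay(\Z_p,S)$: $x\sim y$ if and only if $x-y\in S$. Because $S=-S$, the same holds with $y-x\in S$. Because $0\notin S$, there are no loops, so the graph is simple.

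\emph{Part (i).} Given $s_1,\ldots,s_j\in S$ with $s_1+\cdots+s_j=0$, set $v_0=0$ and $v_i=s_1+\cdots+s_i$ for $1\le i\le j$. Then $v_i-v_{i-1}=s_i\in S$, so consecutive vertices are adjacent. They are distinct as vertices because $s_i\neq 0$. Finally $v_j=0=v_0$, so $v_0v_1\cdots v_j$ is a closed walk of length $j$.

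\emph{Part (ii).} Now assume in addition that $v_0,\ldots,v_{j-1}$ are pairwise distinct. The closed walk from (i) then visits $j$ distinct vertices, each edge joins consecutive ones, and it returns to $v_0$. For $j\ge 3$ this is exactly a cycle $C_j$.

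The degenerate small cases need a word, depending on the paper's convention for cycles.
\begin{itemize}
\item For $j=1$: $s_1=0$ is impossible since $0\notin S$, so the hypothesis is vacuous.
\item For $j=2$: $s_2=-s_1$, and the ``cycle'' would traverse one edge twice. I would note that cycles are understood to have length at least $3$, or that the statement is only applied in that range, which covers every odd $j\ge3$ used later.
\end{itemize}
One should also check that the $j$ edges $v_{i-1}v_i$ are distinct. This follows because the vertices $v_0,\ldots,v_{j-1}$ are distinct and $j\ge3$.

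\emph{Part (iii).} Take a cycle $u_0u_1\cdots u_{j-1}u_0$ in the graph. By translation invariance (the map $x\mapsto x-u_0$ is an automorphism), we may assume $u_0=0$. Set $s_i=u_i-u_{i-1}$ for $1\le i\le j$, with $u_j=u_0$. Each $s_i$ lies in $S$ by adjacency. The sum telescopes to $u_j-u_0=0$. The partial sums are $s_1+\cdots+s_i=u_i-u_0=u_i$, which are pairwise distinct for $0\le i\le j-1$ because the cycle's vertices are distinct. The translation step is not even necessary: the partial sums equal $u_i-u_0$, and these are distinct regardless.

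There is no real obstacle here; the only point needing care is the convention on short cycles in (ii).
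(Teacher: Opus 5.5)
Your proof is correct and follows the paper's argument essentially verbatim: you take partial sums as the vertices of the walk for (i)--(ii), and consecutive differences $s_i=v_i-v_{i-1}$ with a telescoping sum (after translating by $-v_0$) for (iii). Your caveat about $j=2$ is a fair point the paper passes over: there the hypotheses of (ii) hold, since the partial sums $0,s_1$ are distinct, yet no $2$-cycle exists in a simple graph. It is harmless because the lemma is only applied to odd $j\ge 3$.
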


\begin{proof}
For (i), define vertices
\[
v_0=0,\qquad v_i=s_1+\cdots+s_i\quad (1\le i\le j).
\]
Then $v_j=s_1+\cdots+s_j=0=v_0$, so the sequence returns to its starting point.  For each $i$, it follows from 
\[
v_i-v_{i-1}=s_i\in S
\]
that  $v_{i-1}$ and $v_i$ are adjacent in $\Cay(\Z_p,S)$ for every $i$. Thus, $v_0v_1\cdots v_j$ is a closed walk of length $j$.

Part (ii) follows because the same vertices as in (i) form a cycle under the stated distinctness assumption.

For (iii), let $v_0v_1\cdots v_{j-1}v_j$ be a cycle and set $v_j=v_0$.  For $1\le i\le j$, define $s_i=v_i-v_{i-1}$. 
Then 
\[
\sum_{i=1}^j s_i=v_j-v_0=0.
\]
Moreover,
\[
s_1+\cdots+s_i=v_i-v_0\qquad (1\le i\le j).
\]
After translating the cycle by $-v_0$, its vertices become
\[
0,\quad s_1,\quad s_1+s_2,\quad\ldots,\quad s_1+\cdots+s_{j-1}.
\]
This completes the proof. 
\end{proof}

\section{The additive theorem for odd girth}\label{sec:girth}

This section proves the additive core of the paper: an exact bound for symmetric zero-free connection sets whose Cayley graphs have odd girth greater than a prescribed odd integer.  For a graph $G$, the \emph{odd girth} of $G$, denoted by $\og(G)$, is the length of its shortest odd cycle.  If $G$ has no odd cycle, equivalently, if $G$ is bipartite, set $\og(G)=\infty$. For an odd integer $m\ge 3$, define
\[
M_m(p)=\max\{|S|: S=-S,\ 0\notin S,\ \og(\Cay(\Z_p,S))>m\}.
\]

\begin{theorem}\label{thm:odd-girth}
Let $p$ be an odd prime and let $m\ge3$ be odd.  Then
\[
M_m(p)=2\floor{\frac{p+m-2}{2m}}.
\]
\end{theorem}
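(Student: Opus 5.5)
Write $m=2k+1$. The proof has two halves: a lower-bound construction and an upper bound via Cauchy--Davenport.

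\textbf{Reformulation.} By \Cref{lem:walk-zero}(i) and \Cref{lem:closed-walk-cycle}, if $0\in jS$ for some odd $j\le m$, then $\Cay(\Z_p,S)$ has an odd cycle of length at most $m$. Conversely, an odd cycle of length $j\le m$ gives $0\in jS$ by \Cref{lem:walk-zero}(iii). So $\og(\Cay(\Z_p,S))>m$ if and only if $0\notin jS$ for every odd $j\le m$.

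Since $S=-S$, we have $0\in S+(-S)\subseteq 2S$, and hence $(j-2)S\subseteq jS$. Therefore the condition collapses to the single requirement $0\notin mS$.

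\textbf{Construction.} Let $t=\floor{(p+m-2)/(2m)}$ and take $S=\{\pm1,\dots,\pm t\}$. This has $|S|=2t$ provided $t<p/2$, which holds since $p\ge 3$ and $m\ge 3$. Every element of $mS$ is represented by an integer of absolute value at most $mt$, and its parity is congruent to the number of odd summands. A zero in $\Z_p$ would require an integer $N\equiv 0\pmod p$ with $|N|\le mt$.

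We have $mt\le (p+m-2)/2<p$, because $m-2<p$. (If $m\ge p$ we still have $mt<p$, since then $t=0$ or small; I would check this edge case separately.) So $N$ must equal $0$ as an integer. I would rule this out with a parity trick: shift to the set $\{1,3,\dots,2t-1\}$ scaled by $2^{-1}$, i.e.\ use $S=\{\pm(2i-1)\cdot 2^{-1}\}$-type odd integers. Concretely, take $S'=\{\pm1,\pm3,\dots,\pm(2t-1)\}$, whose elements are all odd integers of absolute value at most $2t-1$. Then a sum of $m$ of them is an odd integer, hence nonzero, of absolute value at most $m(2t-1)$. Since $2mt\le p+m-2$, we get $m(2t-1)\le p-2<p$, so the sum is nonzero mod $p$. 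This gives $M_m(p)\ge 2t$.

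\textbf{Upper bound.} Suppose $0\notin mS$ with $|S|=2s$. Cauchy--Davenport (\Cref{thm:CD}) gives $|mS|\ge\min\{p,\,2ms-m+1\}$, so $|mS|<p$ forces $2ms-m+1\le p-1$. That only yields $s\le (p+m-2)/(2m)$ when combined with a sharper count, and this is where the real work lies.

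The crude bound $2ms-m+1\le p-1$ is $s\le (p+m-2)/(2m)$, which is exactly the target. So the upper bound follows directly: $s\le\floor{(p+m-2)/(2m)}$.

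The one subtle point is to verify that the $\min$ is not $p$; that is immediate, since $0\notin mS$ means $|mS|\le p-1$. I expect the main care to be in the construction's edge cases (small $t$, and $m$ large relative to $p$, where the formula gives $0$ or $2$), rather than in the Cauchy--Davenport step.
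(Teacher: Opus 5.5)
Your argument is correct and follows the paper's proof: the upper bound is the same Cauchy--Davenport estimate on $mS$ plus evenness of $|S|$, and your final set $\{\pm1,\pm3,\dots,\pm(2t-1)\}$ is exactly the paper's central interval $I_{p,m}$ dilated by $2$. Your parity check (an odd integer of absolute value at most $m(2t-1)\le p-2$ is nonzero mod $p$) is the paper's ``strictly between $qp$ and $(q+1)p$'' check carried over by $x\mapsto 2x$. In a write-up, delete the abandoned set $\{\pm1,\dots,\pm t\}$ and the aside about needing a ``sharper count'', and state explicitly that $2(2t-1)\le m(2t-1)<p$ is what gives $|S'|=2t$.
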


\begin{proof}
Set
\[
r=\floor{\frac{p+m-2}{2m}}.
\]
We prove the upper and lower bounds separately.

\smallskip
\noindent\textbf{Upper bound.}
Let $S\subseteq\Z_p$ satisfy $S=-S$, $0\notin S$, and $\og(\Cay(\Z_p,S))>m$.  If $S=\emptyset$, there is nothing to prove.  Assume $S\ne\emptyset$.

Since $\og(\Cay(\Z_p,S))>m$, there is no odd cycle of length at most $m$.  We first claim that $0\notin mS$. If not, then there exist $s_1,\ldots,s_m\in S$ with
\[
s_1+\cdots+s_m=0.
\]
By \Cref{lem:walk-zero}, this gives a closed walk of length $m$, which together with \Cref{lem:closed-walk-cycle} gives an odd cycle of length at most $m$, contradicting $\og(\Cay(\Z_p,S))>m$.  Therefore $0\notin mS$.  In particular $mS$ is a proper subset of $\Z_p$, so 
\begin{align}\label{bound-mS-p-1}
|mS|\le p-1.     
\end{align}

On the other hand, by \Cref{thm:CD}, 
\[
|mS|\ge \min\{p,m|S|-(m-1)\}.
\]
Together with \eqref{bound-mS-p-1}, this yields
\[
m|S|-(m-1)\le |mS|\le p-1,
\]
and so
\[
|S|\le \frac{p+m-2}{m}.
\]
Recall that $S=-S$ and $0\notin S$, which implies that $|S|$ is even.
Therefore, $|S|\le2r$.

\smallskip
\noindent\textbf{Lower bound.}
If $r=0$, take $S_r=\emptyset$.  Then $\og(\Cay(\Z_p,S_r))=\infty>m$, as required.  Hence we may assume that $r\ge1$. Let
\[
A=\frac{p+1}{2}-r,
\qquad
B=\frac{p-1}{2}+r.
\]
It follows from $m\ge3$ that $r\le (p-1)/2$, so $1\le A\le B\le p-1$.

Let 
\[
S_r=\{A,A+1,\ldots,B\}\pmod p.
\]
Then $0\notin S_r$ and $|S_r|=B-A+1=2r$. To check symmetry, write an element of $S_r$ in the form
\[
x=\frac{p+1}{2}+t,
\qquad -r\le t\le r-1.
\]
Then
\[
-x\equiv \frac{p-1}{2}-t=\frac{p+1}{2}+(-t-1)\pmod p,
\]
and $-t-1\in[-r,r-1]$.  Thus $S_r=-S_r$.

It remains to show that $\Cay(\Z_p,S_r)$ has no odd cycle of length at most $m$.  By the converse part of \Cref{lem:walk-zero}, any odd cycle of length $j$ would give $0\in jS_r$.  Therefore it suffices to show that for every odd integer $j$ with $1\le j\le m$,
\[
0\notin jS_r.
\]
Let $j=2q+1\le m$, and suppose $s_1,\ldots,s_j\in S_r$.  Represent each $s_i$ by its integer representative in $[A,B]$ and set
\[
T=s_1+\cdots+s_j.
\]
Then $jA\le T\le jB$.  A direct calculation gives
\[
jA-qp=\frac{p+j-2jr}{2}
\]
and
\[
(q+1)p-jB=\frac{p+j-2jr}{2}.
\]
Since $r\le (p+m-2)/(2m)$ and $j\le m$,
\[
2jr\le \frac{j(p+m-2)}{m}.
\]
Consequently,
\[
p+j-2jr\ge p+j-\frac{j(p+m-2)}{m}=\frac{(m-j)p+2j}{m}>0.
\]
Thus $jA-qp>0$ and $(q+1)p-jB>0$.  Since $jA\le T\le jB$, we obtain the strict inequalities
\[
qp<T<(q+1)p.
\]
There is no multiple of $p$ strictly between the consecutive multiples $qp$ and $(q+1)p$.  Hence $T$ is not divisible by $p$, so
\[
s_1+\cdots+s_j\not\equiv0\pmod p.
\]
This proves $0\notin jS_r$ for every odd $j\le m$.  This completes the proof.
\end{proof}

\section{From odd girth to fixed odd cycles}\label{sec:fixed}
In this seciton, we prove \Cref{thm:main}. The only external graph-theoretic input is the following form of the weak odd pancyclicity theorem of Alspach, Bendit, and Maitland \cite{AlspachBenditMaitland2013}.

\begin{theorem}[Alspach--Bendit--Maitland \cite{AlspachBenditMaitland2013}]\label{thm:ABM}
Let $G$ be a finite simple connected Cayley graph of valency at least $3$ on an abelian group.  If $G$ is not bipartite, then $G$ contains cycles of every odd length $L$ satisfying
\[
\og(G)\le L\le |V(G)|.
\]
\end{theorem}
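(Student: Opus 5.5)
Although \Cref{thm:ABM} is quoted from \cite{AlspachBenditMaitland2013}, here is how I would prove it directly. I would grow odd cycles two vertices at a time, starting from a shortest odd cycle and ending at a Hamilton cycle. Write $G=\Cay(\Gamma,S)$ with $\Gamma$ abelian, $S=-S$ generating $\Gamma$, $|S|\ge3$, and $n=|\Gamma|$. As in \Cref{lem:walk-zero}, a cycle corresponds to a cyclic sequence of generators with zero sum and distinct partial sums. Let $g=\og(G)$ and fix a cycle $C_0$ of length $g$, which exists because $G$ is not bipartite. It then suffices to prove the following extension step: \emph{every odd cycle $C$ of length $L<n$ can be replaced by an odd cycle of length $L+2$}. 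Iterating from $C_0$ gives every odd length in $[g,n]$. The endpoint $L=n$ is consistent, since $n$ must be odd for an odd cycle to exist.

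The basic tool is a \emph{square insertion}, and it uses commutativity. Let $uv$ be an edge of $C$ with $v=u+a$, and let $b\in S\setminus\{a,-a\}$. Then $u,\ u+b,\ u+a+b,\ u+a$ is a $4$-cycle. If neither $u+b$ nor $u+a+b$ lies on $C$, we replace the edge $uv$ by the path $u\to u+b\to u+a+b\to v$ and obtain a cycle of length $L+2$. The hypothesis $|S|\ge3$ guarantees that such a $b$ exists for every edge. So the plan is to show that any non-Hamiltonian cycle has an edge $uv$ and a generator $b$ for which both new vertices avoid $C$.

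For that step I would argue by connectivity. Since $C\ne V(G)$ and $G$ is connected, some vertex $w\notin C$ is adjacent to some $u\in C$, say $w=u+b$. Let $a$ and $a'$ be the differences along the two cycle edges at $u$. If $u+a+b$ or $u+a'+b$ is off $C$, we can insert. Otherwise both translates of $w$ lie on $C$, and I would propagate along $C$ in this direction, looking at the translate $C+b$ and how it meets $C$. If $C+b$ met $C$ in a long arc, then $C$ together with $C+b$ would reveal a shorter odd closed walk, or a rerouting that produces the desired cycle.

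I expect this propagation to be the main obstacle. The cycle may be \emph{saturated}, meaning every neighbour of a missing vertex is boxed in, so the simple insertion fails and a more global re-routing is needed. My fallback is induction on the number of generators. Take $H=\langle S\setminus\{\pm b\}\rangle$, so that $\Gamma$ is a cyclic union of $H$-cosets joined by $b$-edges. Then build the cycles coset by coset, using Hamilton paths inside cosets between prescribed endpoints, which follows from the known Hamilton-connectivity properties of abelian Cayley graphs (Chen--Quimpo). Odd lengths would be adjusted by choosing how many full cosets to sweep and by square insertions inside a single coset. The case where $H$ has valency $2$, so that the coset subgraphs are cycles, would be handled directly.
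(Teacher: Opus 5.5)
The paper gives no proof of this theorem; it imports it from Alspach--Bendit--Maitland. So your argument has to stand on its own, and as written it has a genuine gap. Everything rests on the extension step (an odd cycle of length $L\le n-2$ can be lengthened by $2$). The only mechanism you actually establish for it, square insertion, provably does not suffice, even in exactly the setting the paper uses.

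In $\Cay(\Z_7,\{\pm1,\pm2\})$ take the $5$-cycle $C=(1,2,4,5,6)$, which misses $0$ and $3$. For every edge $uv$ of $C$ and both admissible $b$, the pair $\{u+b,v+b\}$ meets $C$:
\begin{itemize}
\item for $\{1,2\}$ the pairs are $\{3,4\}$ and $\{6,0\}$;
\item for $\{2,4\}$ they are $\{3,5\}$ and $\{1,3\}$;
\item for $\{4,5\}$ they are $\{6,0\}$ and $\{2,3\}$;
\item for $\{5,6\}$ they are $\{0,1\}$ and $\{3,4\}$;
\item for $\{6,1\}$ they are $\{0,2\}$ and $\{5,0\}$.
\end{itemize}
More generally, $1,2,4,5,\dots,n-1$ is such a saturated cycle of odd length $n-2$ in $\Cay(\Z_n,\{\pm1,\pm2\})$ for every odd $n\ge7$. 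So saturated cycles are not a corner case. Your propagation paragraph does not handle them: a shorter odd closed walk contradicts nothing once $C\neq C_0$, since only $C_0$ is known to be a shortest odd cycle. And ``a rerouting that produces the desired cycle'' is precisely the construction that is missing.

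The fallback is only an outline, and it skips exactly the points where the theorem is hard. In the case the paper needs, $\Gamma=\Z_p$, every nonzero element generates. Hence $H=\langle S\setminus\{\pm b\}\rangle=\Z_p$ and the coset decomposition is vacuous: you are left with $\Cay(\Z_p,S\setminus\{\pm b\})$ plus chords. That graph's odd girth can exceed $\og(G)$, so induction would cover only the upper part of the range, and the lengths between $\og(G)$ and that odd girth still need an argument. When $|S|=4$ in $\Z_p$, the smaller graph is just a $p$-cycle, so your ``handled directly'' case is the entire problem for examples like the one above.

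In general abelian groups, the coset graphs may also have valency below $3$, where Chen--Quimpo says nothing. Or they may be bipartite, where it gives only Hamilton-laceability, i.e.\ Hamilton paths only between vertices of opposite colour. You address neither this parity bookkeeping nor how to hit each of the small lengths $g,g+2,\dots$ exactly.

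Finally, a factual slip: a non-bipartite Cayley graph need not have odd order. For example, $\Cay(\Z_6,\{\pm1,\pm2\})$ contains the triangle $0,1,2$. So for even $n$ your extension step is false at $L=n-1$, and the iteration must stop at $L\le n-2$. This is harmless for the paper's application with $n=p$, but not for the statement as quoted.
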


\begin{lemma}\label{lem:degree-two}
If $S\subseteq\Z_p$ satisfies $S=-S$, $0\notin S$, and $|S|=2$, then $\Cay(\Z_p,S)$ is a cycle of length $p$.
\end{lemma}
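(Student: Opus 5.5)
Since $S=-S$, $0\notin S$, and $|S|=2$, the set $S$ has the form $\{s,-s\}$ with $s\ne 0$. Note $s\ne -s$ because $p$ is odd, so $2s\ne 0$. The plan is to show that $\Cay(\Z_p,S)$ is a connected $2$-regular simple graph on $p$ vertices. Such a graph is a single cycle of length $p$.

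First, every vertex $x$ has exactly the two neighbours $x+s$ and $x-s$. These are distinct from each other since $2s\ne0$, and distinct from $x$ since $s\ne 0$. So the graph is simple and $2$-regular. A finite $2$-regular graph is a disjoint union of cycles. It therefore suffices to show the graph is connected, or, more concretely, to exhibit the Hamiltonian cycle directly.

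For the direct approach, consider the vertex sequence $0,s,2s,\ldots,(p-1)s,ps=0$. Consecutive terms differ by $s\in S$, so they are adjacent. Because $p$ is prime and $s\ne0$, $s$ is invertible in $\Z_p$. Hence the map $i\mapsto is$ is injective on $\{0,1,\ldots,p-1\}$, and the vertices $0,s,\ldots,(p-1)s$ are pairwise distinct and exhaust $\Z_p$. By \Cref{lem:walk-zero}(ii), applied with $s_1=\cdots=s_p=s$, this is a cycle of length $p$ through all vertices; here $p\ge3$ is needed for a genuine cycle. Every vertex has degree $2$ and already uses both of its edges in this cycle, so there are no further edges. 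Thus $\Cay(\Z_p,S)$ equals this $p$-cycle.

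There is no real obstacle. The only points needing care are that $s\neq -s$, which uses that $p$ is odd, and that $s$ generates $\Z_p$, which uses that $p$ is prime.
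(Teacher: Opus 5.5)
Your proof is correct and follows essentially the same route as the paper. You write $S=\{s,-s\}$, use that $s$ generates $\Z_p$ so that $0,s,\ldots,(p-1)s$ exhausts the vertex set, and observe that the edges are exactly the consecutive pairs $x\sim x\pm s$; your extra checks ($s\ne -s$ because $p$ is odd, and the degree count showing there are no edges beyond the Hamiltonian cycle) spell out what the paper's shorter argument leaves implicit.
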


\begin{proof}
Write $S=\{a,-a\}$ with $a\ne0$.  Since $p$ is prime, every nonzero element of $\Z_p$ generates the additive group, so
\[
0,a,2a,\ldots,(p-1)a
\]
are all the elements of $\Z_p$.  The edges of $\Cay(\Z_p,S)$ are exactly the pairs $x\sim x+a$ and $x\sim x-a$, which form a cycle of length $p$.
\end{proof}

\begin{proof}[Proof of \Cref{thm:main}]
Put $m=2\ell+1$.  Then $m$ is odd with $3\le m\le p$.  For simplicity, set 
\[
R=2\floor{\frac{p+m-2}{2m}}.
\]
It suffices to prove that $\ex_{\Cay}(C_{2\ell+1},\Z_p)=R$.

\smallskip
\noindent\textbf{Lower bound.}
Let $S_r$ be the set constructed in the lower-bound part of \Cref{thm:odd-girth}, with
\[
r=\floor{\frac{p+m-2}{2m}}. 
\]
Then $\og(\Cay(\Z_p,S_r))>m$.  Therefore
\[
\ex_{\Cay}(C_{2\ell+1},\Z_p)\ge R.
\]

\smallskip
\noindent\textbf{Upper bound.}
Let $S\subseteq\Z_p$ satisfy $S=-S$, $0\notin S$, and suppose $\Cay(\Z_p,S)$ contains no $C_{2\ell+1}$.  We prove $|S|\le R$. We may assume that $|S|\ge 2$, since otherwise $S=\emptyset$ and there is nothing to prove.

If $|S|=2$, then by \Cref{lem:degree-two}, $\Cay(\Z_p,S)\cong C_p$.  If $p=2\ell+1$, this graph is itself a copy of $C_{2\ell+1}$, a contradiction. Otherwise, $R\ge2=|S|$, and the desired bound follows.

It remains to consider $|S|\ge4$.  We check the hypotheses of \Cref{thm:ABM}.  Let $G=\Cay(\Z_p,S)$.  Since $S$ is nonempty, choose $a\in S$. Then $a\ne0$, and the subgroup generated by $a$ is all of $\Z_p$.  The edges using the two generators $\pm a$ already connect all vertices, so $G$ is connected.  Moreover, its valency is $|S|\ge4$, because from each vertex $x$ the neighbors are exactly $x-s$ with $s\in S$.  Finally, $G$ is not bipartite.  Indeed, if a $d$-regular bipartite graph with $d>0$ has bipartition $U,V$, then counting edges from the two sides gives $d|U|=d|V|$, so $|U|=|V|$.  This is impossible here because $|V(G)|=p$ is odd.  Thus all hypotheses of \Cref{thm:ABM} are satisfied.

Let $g=\og(G)$.  By \Cref{thm:ABM}, $G$ contains cycles of every odd length $L$ with $g\le L\le p$.  Since $m=2\ell+1$ is odd and $m\le p$, if $g\le m$, then \Cref{thm:ABM} gives a cycle of length $m$, contradicting the assumption that $G$ is $C_{2\ell+1}$-free.  Hence $g>m$.  Applying \Cref{thm:odd-girth} gives
\[
|S|\le M_m(p)=R.
\]
This proves the upper bound.

Finally, it suffices to show that if $p>2\ell+1$, then
\[
\floor{\frac{p+m-2}{2m}}=\floor{\frac{p+m}{2m}}.
\]
Indeed, the two real numbers differ by $1/m<1$.  Therefore their floors can differ only if the interval
\[
\left[\frac{p+m-2}{2m},\frac{p+m}{2m}\right]
\]
contains an integer.  Equivalently, the residue of $p+m-2$ modulo $2m$ must be $2m-2$ or $2m-1$.  These two possibilities give respectively
\[
p\equiv m\pmod{2m}
\qquad\text{or}\qquad
p\equiv m+1\pmod{2m}.
\]
The second is impossible because $p$ is odd while $m+1$ is even.  The first would imply $m\mid p$, impossible when $p$ is prime and $p>m$ with $m\ge3$.  This proves the equivalent formula for $p>2\ell+1$.
\end{proof}

\section{Extremal constructions}\label{sec:extremal}

This section records the extremal examples and the corresponding optimality criterion.  Fix an odd prime $p$ and an integer $\ell$ with $1\le \ell\le (p-1)/2$.  Put $m=2\ell+1$, and write
\[
r=\floor{\frac{p+m-2}{2m}},\qquad R=2r.
\]
Define the central interval
\[
I_{p,m}=\begin{cases}
\emptyset, & r=0,\\[2mm]
\left\{\dfrac{p+1}{2}-r,\dfrac{p+1}{2}-r+1,\ldots,\dfrac{p-1}{2}+r\right\}\pmod p, & r\ge1.
\end{cases}
\]
For $a\in\Z_p^\times$, let
\[
I_{p,m}(a)=aI_{p,m}:=\{ax:x\in I_{p,m}\}.
\]
Here multiplication by $a$ is taken in the field $\Z_p$; since $a\ne0$, it is a bijection of $\Z_p$.

\begin{proposition}\label{prop:canonical}
For every $a\in\Z_p^\times$, the set $I_{p,m}(a)$ satisfies
\[
I_{p,m}(a)=-I_{p,m}(a),\qquad 0\notin I_{p,m}(a),\qquad |I_{p,m}(a)|=R,
\]
and the Cayley graph generated by $I_{p,m}(a)$ contains no $C_{2\ell+1}$.  Consequently, it attains the Cayley--Tur\'{a}n number for $C_{2\ell+1}$ over $\Z_p$.
\end{proposition}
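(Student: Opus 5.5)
The plan is to reduce everything to the case $a=1$, which is exactly the set $S_r$ built in the lower-bound part of \Cref{thm:odd-girth}, and then transport the properties along the dilation $x\mapsto ax$. First I check $I_{p,m}=I_{p,m}(1)$ directly. If $r=0$, it is empty and all claims are trivial: it is symmetric, avoids $0$, has size $0=R$, and the edgeless graph contains no $C_{2\ell+1}$. If $r\ge1$, then $I_{p,m}$ coincides with $S_r$. The proof of \Cref{thm:odd-girth} already shows that $S_r=-S_r$, that $0\notin S_r$, that $|S_r|=2r=R$, and that $\og(\Cay(\Z_p,S_r))>m$. In particular $\Cay(\Z_p,I_{p,m})$ has no odd cycle of length at most $m$, hence no $C_{2\ell+1}$.

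Next I transfer these properties to $I_{p,m}(a)$ for arbitrary $a\in\Z_p^\times$. Let $\phi_a(x)=ax$. Since $a\ne0$ in the field $\Z_p$, the map $\phi_a$ is an additive bijection of $\Z_p$. This gives the required properties as follows.
\begin{enumerate}[label=(\roman*)]
\item Cardinality: $|aI_{p,m}|=|I_{p,m}|=R$.
\item Zero avoidance: $0\notin aI_{p,m}$, because $ax=0$ forces $x=0$.
\item Symmetry: $-aI_{p,m}=a(-I_{p,m})=aI_{p,m}$.
\item Graph isomorphism: $x-y\in I_{p,m}$ if and only if $ax-ay\in aI_{p,m}$. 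Hence $\phi_a$ is an isomorphism from $\Cay(\Z_p,I_{p,m})$ onto $\Cay(\Z_p,I_{p,m}(a))$.
\end{enumerate}
Being $C_{2\ell+1}$-free is preserved under isomorphism, so $\Cay(\Z_p,I_{p,m}(a))$ contains no $C_{2\ell+1}$. Alternatively, one can work purely additively: a relation $s_1+\cdots+s_j=0$ with $s_i\in aI_{p,m}$ pulls back, after multiplying by $a^{-1}$, to a relation in $I_{p,m}$. This contradicts $0\notin jI_{p,m}$ for odd $j\le m$.

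Finally, for the extremality claim I invoke \Cref{thm:main}. Its proof shows that $\ex_{\Cay}(C_{2\ell+1},\Z_p)=R=2\floor{(p+m-2)/(2m)}$ in all cases $p\ge 2\ell+1$. When $p=m$ this value is $2\floor{(2m-2)/(2m)}=0$, which is consistent. So a $C_{2\ell+1}$-free symmetric zero-free set of size $R$ attains the maximum.

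There is no real obstacle here. The only point needing care is the degenerate case $r=0$ and the hidden case $p=m$. In both, the definition of $I_{p,m}$ and the value $R=0$ must be matched with the statement of \Cref{thm:main}.
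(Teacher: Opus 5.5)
Your proposal is correct and follows essentially the paper's argument. You reduce to $a=1$ (the interval $S_r$ from \Cref{thm:odd-girth}), transport the properties along the dilation $x\mapsto ax$, and quote \Cref{thm:main} for the matching upper bound. The only difference is that the paper transfers $C_{2\ell+1}$-freeness additively via $j(aI_{p,m})=a(jI_{p,m})$ rather than via the graph isomorphism, but it notes that isomorphism immediately afterward, so the two routes are interchangeable.
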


\begin{proof}
The case $a=1$ is exactly the lower-bound construction in the proof of \Cref{thm:odd-girth}.  In particular, $I_{p,m}$ is symmetric, zero-free, has size $R$, and satisfies
\[
0\notin jI_{p,m}
\]
for every odd integer $j$ with $1\le j\le m$.

Multiplication by $a$ is an automorphism of the additive group $\Z_p$.  Hence it preserves cardinality, fixes $0$, and commutes with taking negatives:
\[
a(-I_{p,m})=-(aI_{p,m}).
\]
It also preserves the zero-sum avoidance condition.  Indeed,
\[
j(aI_{p,m})=a(jI_{p,m}),
\]
so $0\in j(aI_{p,m})$ would imply $0\in jI_{p,m}$ after multiplying by $a^{-1}$.  Thus the same properties hold for $aI_{p,m}$.  By \Cref{lem:walk-zero} and \Cref{lem:closed-walk-cycle}, $\Cay(\Z_p,I_{p,m}(a))$ has odd girth greater than $m=2\ell+1$, and in particular contains no $C_{2\ell+1}$.  Since \Cref{thm:main} gives the upper bound $R$, this graph is extremal.
\end{proof}

Multiplication by a nonzero scalar also shows that all graphs in the canonical family are mutually isomorphic as Cayley graphs.  Indeed, the map $x\mapsto a^{-1}x$ sends $\Cay(\Z_p,aI_{p,m})$ to $\Cay(\Z_p,I_{p,m})$. The following criterion gives a convenient form of the full extremality condition.

\begin{proposition}\label{prop:criterion}
Let $S\subseteq\Z_p$ satisfy $S=-S$, $0\notin S$, and $|S|=R$.  Then the following are equivalent:
\begin{enumerate}[label=(\roman*)]
\item $\Cay(\Z_p,S)$ is $C_{2\ell+1}$-free;
\item $\og(\Cay(\Z_p,S))>2\ell+1$;
\item $0\notin jS$ for every odd integer $j$ with $1\le j\le 2\ell+1$.
\end{enumerate}
Consequently, the extremal graphs are exactly the Cayley graphs generated by symmetric zero-free sets $S$ of size $R$ satisfying the odd zero-sum avoidance condition in (iii).
\end{proposition}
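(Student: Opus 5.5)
I will prove the cycle of implications (ii)$\Rightarrow$(i), (iii)$\Rightarrow$(ii), (ii)$\Rightarrow$(iii), and (i)$\Rightarrow$(ii). The first three need nothing beyond \Cref{lem:walk-zero} and \Cref{lem:closed-walk-cycle}. The last one is where the hypothesis $|S|=R$ is used, together with the pancyclicity argument from the proof of \Cref{thm:main}.

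(ii)$\Rightarrow$(i) is immediate. If the odd girth exceeds $2\ell+1$, there is no odd cycle of length $2\ell+1$.

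(iii)$\Rightarrow$(ii): suppose $\Cay(\Z_p,S)$ had an odd cycle of length $j\le 2\ell+1$. By \Cref{lem:walk-zero}(iii), some $s_1,\dots,s_j\in S$ sum to $0$, so $0\in jS$. Since $j$ is odd, this contradicts (iii).

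(ii)$\Rightarrow$(iii): suppose $0\in jS$ for some odd $j\le 2\ell+1$. Note that $j=1$ is excluded because $0\notin S$. By \Cref{lem:walk-zero}(i) there is a closed walk of odd length $j$. By \Cref{lem:closed-walk-cycle} there is then an odd cycle of length at most $j\le 2\ell+1$, which contradicts (ii).

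(i)$\Rightarrow$(ii) is the main step. I will split into cases by $R$, repeating the argument from the upper bound of \Cref{thm:main}.
\begin{itemize}
\item If $R=0$, then $S=\emptyset$. The graph is edgeless, its odd girth is $\infty$, and (iii) holds vacuously.
\item If $R=2$, \Cref{lem:degree-two} gives $\Cay(\Z_p,S)\cong C_p$, whose only odd cycle has length $p$. Condition (i) then forces $p\ne 2\ell+1$, so $p>2\ell+1$ and the odd girth $p$ exceeds $2\ell+1$.
\item If $R\ge4$, the graph is connected, has valency at least $3$, and is non-bipartite because $p$ is odd. \Cref{thm:ABM} then says that an odd girth $g\le 2\ell+1\le p$ would produce a copy of $C_{2\ell+1}$, contradicting (i). Hence $g>2\ell+1$.
\end{itemize}

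One edge case needs checking: when $p=2\ell+1$ we get $R=2\lfloor (2p-2)/(2p)\rfloor=0$, so only the first case arises there.

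The final \emph{consequently} then follows from the main theorem. Extremal graphs are exactly the $C_{2\ell+1}$-free Cayley graphs with $|S|=R$, and by (i)$\Leftrightarrow$(iii) these are exactly the graphs whose connection set $S$ satisfies the odd zero-sum avoidance condition.

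The main obstacle is (i)$\Rightarrow$(ii). It is the only step that uses the external pancyclicity theorem, and it requires the small-degree cases to be handled separately.
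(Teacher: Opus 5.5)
Your proposal is correct and follows the paper's proof essentially step for step. You get (ii)$\Leftrightarrow$(iii) from \Cref{lem:walk-zero} and \Cref{lem:closed-walk-cycle}, and (i)$\Rightarrow$(ii) from the same $R=0$, $R=2$, $R\ge4$ case split using \Cref{lem:degree-two} and \Cref{thm:ABM}; the only cosmetic difference is that for $R=2$ you rule out $p=2\ell+1$ directly from (i), whereas the paper observes that $R=2$ cannot occur when $p=2\ell+1$, and both arguments are valid.
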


\begin{proof}
The implication (ii)$\Rightarrow$(i) is immediate.

We next prove (ii)$\Leftrightarrow$(iii).  Suppose first that (iii) fails.  Then for some odd $j\le 2\ell+1$ there are elements $s_1,\ldots,s_j\in S$ with $s_1+\cdots+s_j=0$.  By \Cref{lem:walk-zero}, this gives a closed walk of length $j$, and by \Cref{lem:closed-walk-cycle} it contains an odd cycle of length at most $j\le 2\ell+1$.  Hence $\og(\Cay(\Z_p,S))\le 2\ell+1$, so (ii) fails.  Conversely, if (ii) fails, then there is an odd cycle of some length $j\le 2\ell+1$.  The converse part of \Cref{lem:walk-zero} gives elements of $S$ whose sum is $0$, so $0\in jS$ and (iii) fails.

It remains to prove (i)$\Rightarrow$(ii), using the additional assumption $|S|=R$.  If $R=0$, then $S=\emptyset$ and there are no cycles, so (ii) is clear.  If $R=2$, then $S=\{a,-a\}$ for some nonzero $a$, and \Cref{lem:degree-two} gives $\Cay(\Z_p,S)\cong C_p$.  Since $R=2$ cannot occur when $2\ell+1=p$, we have $2\ell+1<p$ in this case; hence the only odd cycle in this degree-two Cayley graph has length $p>2\ell+1$, and $\og(\Cay(\Z_p,S))=p>2\ell+1$.

Finally suppose $R\ge4$.  As in the proof of \Cref{thm:main}, the graph $\Cay(\Z_p,S)$ is a finite simple connected non-bipartite Cayley graph on the abelian group $\Z_p$, and its valency is at least $4$.  If its odd girth were at most $2\ell+1$, then \Cref{thm:ABM} would give cycles of all odd lengths from the odd girth up to $p$, including a cycle of length $2\ell+1$.  This contradicts (i).  Thus (ii) holds.
\end{proof}

\begin{remark}
The canonical interval family does not give uniqueness.  For example, let $p=23$ and $\ell=2$, so $2\ell+1=5$.  Then $R=4$, and
\[
S=\{\pm1,\pm5\}\subseteq\Z_{23}
\]
is extremal.  Indeed, write any sum of $j\in\{3,5\}$ elements of $S$ as $u+5v$, where $u,v\in\Z$, $|u|+|v|\le j$, and $u+v\equiv j\pmod2$.  If $j=3$, then $|u+5v|\le15<23$, so a zero congruence modulo $23$ would force $u+5v=0$.  This implies $u=-5v$ and hence $6|v|\le3$, so $u=v=0$, contradicting $u+v\equiv1\pmod2$.  If $j=5$, then $u+5v\in[-25,25]$, so a zero congruence modulo $23$ would require $u+5v\in\{0,\pm23\}$.  The case $0$ is excluded as above, and for $t\in\{\pm23\}$ one checks that
\[
|t-5v|+|v|>5\qquad(-5\le v\le5),
\]
so $u+5v=t$ is impossible under $|u|+|v|\le5$.  Therefore $0\notin3S\cup5S$, and \Cref{prop:criterion} shows that $\Cay(\Z_{23},S)$ is extremal.

This $S$ is not a multiplicative dilation of the central interval $I_{23,5}=\{\pm10,\pm11\}$.  For a four-point symmetric set $\{\pm u,\pm v\}$, the unordered ratio set $\{\pm v/u,\pm u/v\}$ is invariant under dilation.  For $\{\pm10,\pm11\}$ this ratio set is $\{\pm8,\pm3\}$, whereas for $\{\pm1,\pm5\}$ it is $\{\pm5,\pm14\}$ in $\Z_{23}$.  These sets are distinct.
\end{remark}

\section{Concluding remarks}\label{sec:remarks}

For a fixed $\ell$, \Cref{thm:odd-girth} shows that if a symmetric connection set has density exceeding
\[
\frac1{2\ell+1}+O_\ell\!\left(\frac1p\right),
\]
then $0\in (2\ell+1)S$, forcing an odd closed walk of length $2\ell+1$ and hence an odd cycle of length at most $2\ell+1$.  The interval construction centered at $p/2$ shows that this threshold is sharp for excluding all odd cycles up to length $2\ell+1$.

The passage from odd-girth exclusion to fixed $C_{2\ell+1}$-exclusion is not a purely additive statement; it uses a pancyclicity theorem for abelian Cayley graphs.  This explains why the same zero-sum argument alone proves the exact odd-girth theorem but not, by itself, the fixed-cycle theorem.

Several refinements remain natural.  The exact structural classification of all extremal connection sets remains open even in apparently small cases.  A second direction is to study cyclic groups of composite order, where the Cauchy--Davenport theorem must be replaced by Kneser-type estimates and where nontrivial subgroups introduce additional extremal phenomena.

\end{document}